\newcommand{\conj}[1]{#1^{\star}}
\newcommand{\biconj}[1]{#1^{\star\star}}
\newcommand{\ts}{\tau^*}
\DeclareMathOperator{\cl}{cl}
\newcommand{\popt}{\tau^*_{\smash p}}
\newcommand{\dopt}{\tau^*_d}
\DeclareMathOperator{\vfepi}{vfepi}
\Crefname{figure}{Figure}{Figures}
\numberwithin{equation}{section}
\numberwithin{figure}{section}
\begin{document}
\def\makeheadbox{\relax}

\title{A perturbation view of level-set methods for\\convex optimization}

\titlerunning{Perturbation view of level-set methods}

\author{Ron Estrin\and
        Michael P.\@ Friedlander 
      }

\authorrunning{Estrin and Friedlander} %

\institute{R. Estrin \at
  Institute for Computational and Mathematical Engineering
  \\Stanford University, Stanford, CA, USA
  \\\email{restrin@stanford.edu}
  \and
  M. P. Friedlander \at         
  Department of Computer Science and Department of Mathematics
  \\University of British Columbia
  \\Vancouver, BC, V6R 1Y8, Canada
  \\\email{michael@friedlander.io}
}

\date{January 17, 2020 (revised May 15, 2020)}

\maketitle

\begin{abstract}
  Level-set methods for convex optimization are predicated on the idea that
  certain problems can be parameterized so that their solutions can be recovered
  as the limiting process of a root-finding procedure. This idea emerges time
  and again across a range of algorithms for convex problems. Here we
  demonstrate that strong duality is a necessary condition for the level-set
  approach to succeed. In the absence of strong duality, the level-set method
  identifies $\epsilon$-infeasible points that do not converge to a
  feasible point as $\epsilon$ tends to zero. The level-set approach
  is also used as a proof technique for establishing sufficient conditions for
  strong duality that are different from Slater's constraint qualification.
  \keywords{convex analysis \and duality \and level-set methods}
\end{abstract}

\section{Introduction}

Duality in convex optimization may be interpreted as a notion of sensitivity
of an optimization problem to perturbations of its data. Similar notions of
sensitivity appear in numerical analysis, where the effects of numerical errors
on the stability of the computed solution are of central concern. Indeed,
backward-error analysis \cite[\S1.5]{higham2002accuracy} describes the related
notion that computed approximate solutions may be considered as exact solutions
of perturbations of the original problem. It is natural, then, to ask if duality
can help us understand the behavior of a class of numerical algorithms for
convex optimization. In this paper, we describe how the level-set method
\citep{spgl12007,bf2008,2016aravkinlevel} produces an incorrect solution when
applied to a problem for which strong duality fails to hold. In other words, the
level-set method cannot succeed if there does not exist a dual pairing that is
tight. This failure of strong duality indicates that the stated optimization
problem is brittle, in the sense that its value as a function of small
perturbations to its data is discontinuous; this violates a vital assumption
needed for the level-set method to succeed.

Consider the convex optimization problem
\begin{equation}
  \label{eq:cvx_primal}
  \minimize{x \in \Xscr} \enspace f(x) \enspace\st\enspace g (x) \leq 0,
  \tag{\mbox{P}}
\end{equation}
where $f$ and $g$ are closed proper convex functions that map $\Real^n$ to the
extended real line $\Real\cup\{\infty\}$, and $\Xscr$ is a convex set in
$\Real^n$. Let the optimal value $\popt$ of \eqref{eq:cvx_primal} be finite,
which indicates that that~\eqref{eq:cvx_primal} is feasible. In the context of
level-set methods, we may think of the constraint $g(x)\le0$ as representing
a computational challenge. For example, there may not exist any efficient
algorithm to compute the projection onto the constraint set
$\set{x\in\Xscr|g(x)\le0}$. In many important cases, the objective function has
a useful structure that makes it computationally convenient to swap the roles of
the objective $f$ with the constraint $g$, and instead to solve the
\emph{level-set problem}
\begin{equation}
  \label{eq:cvx_flipped}
  \minimize{x \in \Xscr} \enspace g(x) \enspace\st\enspace f (x) \leq \tau,
  \tag{\mbox{Q$_\tau$}}
\end{equation}
where $\tau$ is an estimate of the optimal value $\popt$. The term ``level set''
points to the feasible set of problem \eqref{eq:cvx_flipped}, which is the $\tau$
level set $\set{x | f(x)\le\tau}$ of the function $f$.

If $\tau \approx \popt$, the level-set constraint $f(x)\leq\tau$ ensures that a
solution $x_\tau\in\Xscr$ of this problem causes $f(x_\tau)$ to have a value
near the optimal value $\popt$. If, additionally, $g(x_\tau)\le0$, then $x_\tau$
is a nearly optimal and feasible solution for~\eqref{eq:cvx_primal}. The
trade-off for this potentially more convenient problem is that we must compute a
sequence of parameters $\tau_k$ that converges to $\popt$.

\subsection{Objective and constraint reversals}

The technique of exchanging the roles of the objective and constraint functions
has a long history. For example, the isoperimetric problem, which dates back to
the second century B.C.E., seeks the maximum area that can be circumscribed by
a curve of fixed length \citep{Wiegert:2010}. The converse problem seeks the
minimum-length curve that encloses a certain area. Both problems yield the same
circular solution. The mean-variance model of financial portfolio optimization,
pioneered by ~\citet{Mark1987}, is another example. It can be phrased
as either the problem of allocating assets that minimize risk (i.e., variance)
subject to a specified mean return, or as the problem of maximizing the mean
return subject to a specified risk. The correct parameter choice, such as $\tau$ in the
case of the level-set problem \eqref{eq:cvx_flipped}, causes both problems to
have the same solution.

The idea of rephrasing an optimization problem as a root-finding problem
appears often in the optimization literature. The celebrated
Levenberg-Marquardt algorithm~\citep{Marq63,Morr60}, and trust-region
methods~\citep{conngoultoin:2000} more generally, use a root-finding
procedure to solve a parameterized version of the optimization
problem.
\cite{lemaneminest:1995} develop a root-finding procedure for a level-bundle
method for general convex optimization.
The widely used SPGL1 software package for sparse
optimization~\citep{BergFrie:2007b} implements the level-set method for
obtaining sparse solutions of linear least-squares and underdetermined
linear systems \citep{BergFriedlander:2008,BergFriedlander:2011}.

\subsection{Duality of the value function root}\label{sec:overview}

Define the optimal-value function, or simply the \emph{value
  function}, of~\eqref{eq:cvx_flipped} by
\begin{equation}
  \label{eq:Pf}
  v(\tau) = \inf_{x\in\Xscr}\set{g(x) | f(x) \le \tau}.
\end{equation}
If the constraint in~\eqref{eq:cvx_primal} is \emph{active} at a solution, that
is, $g(x)=0$, this definition then suggests that the optimal value $\popt$ of \eqref{eq:cvx_primal} is a
root of the equation
\begin{equation*}%
  v(\tau)=0,
\end{equation*}
and in particular, is the leftmost root:
\begin{equation} \label{eq:11}
  \popt = \inf\set{\tau | v(\tau) = 0}.
\end{equation}
The surprise is that this is not always true.

In fact, as we demonstrate in this paper, the failure of strong duality for
\eqref{eq:cvx_primal} implies that
\begin{equation}\label{eq:duality-gap}
  \dopt \coloneqq \inf\set{\tau | v(\tau) = 0} < \popt.
\end{equation}
Thus, a root-finding algorithm, such as bisection or Newton's method,
implemented so as to yield the leftmost root of the equation $v(\tau)=0$ will
converge to a value of $\tau$ that prevents~\eqref{eq:cvx_flipped} from attaining
a meaningful solution. This phenomenon is depicted in \cref{fig:val-func-sdp},
and is manifested by the semidefinite optimization problem in
\cref{sec:example_2}. Moreover, the infimal value
in~\eqref{eq:duality-gap}, defined here as $\dopt$, coincides with the optimal value of any dual pairing
of \eqref{eq:cvx_primal} that arises from Fenchel-Rockafellar convex duality
\cite[Theorem 11.39]{rtrw:1998}. These results are established by Theorems~\ref{thm:val-fnc-roots} and~\ref{thm:val-fnc-roots-general}.

We do not assume that our readers are experts in convex duality theory, and so
we present an abbreviated summary of the machinery needed to develop our main
results. We also describe a generalized version of the level-set pairing between
the problems~\eqref{eq:cvx_primal} and~\eqref{eq:cvx_flipped}, and thus
establish \cref{thm:val-fnc-roots-general}. We show in \cref{sec:examples} how
these theoretical results can be used to establish sufficient conditions for
strong duality.

\begin{figure}[t]
  \centering
  \includegraphics[page=1]{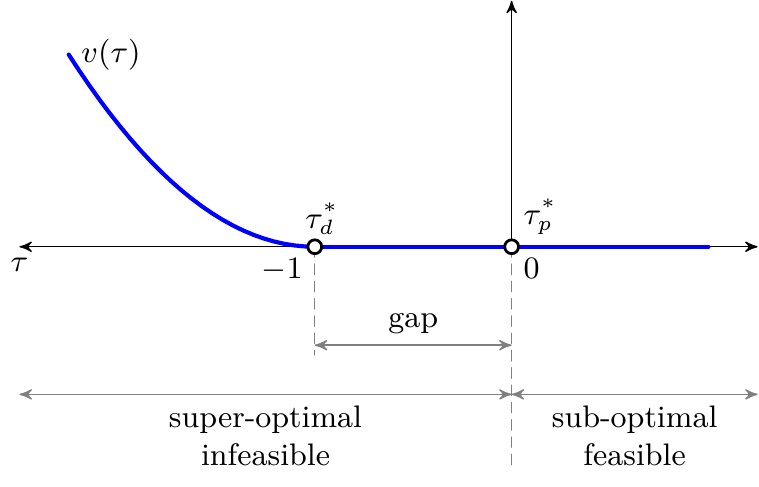}
  \caption{A depiction of a value function $v$ that exhibits the strict
    inequality described by~\eqref{eq:duality-gap}; see also
    \cref{sec:example_2}. In this example, the value function $v(\tau)$ vanishes
    for all $\tau\ge\dopt$, where $\dopt<\popt$.  Solutions of~\eqref{eq:Pf} for
    values of $\tau<\popt$ are necessarily super-optimal and infeasible
    for~\eqref{eq:cvx_primal}. The difference between $\dopt$ and $\popt$
    corresponds to the gap between the optimal values of \eqref{eq:cvx_primal} and its dual problem.}
  \label{fig:val-func-sdp}
\end{figure}

\subsection{Level-set methods}

In practice, only an approximate solution of the
problem~\eqref{eq:cvx_primal} is required, and the level-set method can
be used to obtain an approximate root that satisfies
$v(\tau)\le\epsilon$. The solution $x\in\Xscr$ of the corresponding
level-set problem~\eqref{eq:cvx_flipped} is super-optimal and
$\epsilon$-infeasible:
\begin{equation*}
  f(x) \le \popt \textt{and} g(x) \le \epsilon.
\end{equation*}
\citet{2016aravkinlevel} describe the general level-set approach, and establish
a complexity analysis that asserts that $\BigOh\big(\log\epsilon^{-1}\big)$
approximate evaluations of $v$ are required to obtain an $\epsilon$-infeasible
solution. These root-finding procedures are based on standard approaches,
including bisection, secant, and Newton methods. The efficiency of these
approaches hinges on the accuracy required of each evaluation of the value
function $v$. Aravkin et al. also demonstrate that the required complexity can be
achieved by requiring a bound on error in each evaluation of $v$ that is
proportional to $\epsilon$.

The formulation \eqref{eq:cvx_primal} is very general, even though the
constraint $g(x)\le0$ represents only a single function of the full constraint
set represented by $\Xscr$. There are various avenues for reformulating any
combination of constraints that lead to a single functional-constraint
formulation such as \eqref{eq:cvx_primal}. For instance, multiple linear
constraints of the form $Ax=b$ can be represented as a constraint on the norm of
the residual, i.e., $g(x) = \norm{Ax-b} \le 0$. More generally, for any set of
constraints $c(x)\le0$ where $c=(c_i)$ is a vector of convex functions $c_i$, we may
set $g(x) = \rho(\max\{0,\,c(x)\})$ for any convenient nonnegative convex
function $\rho$ that vanishes only at the origin, thus ensuring that $g(x)\le0$
if and only if $c(x)\le0$.

\section{Examples} \label{sec:examples}

We provide concrete examples that exhibit the behavior shown
in~\eqref{eq:duality-gap}. These semidefinite programs (SDPs) demonstrate that the
level-set method can produce diverging iterates.

Let $x_{ij}$ denote the $(i,j)th$
entry of the $n$-by-$n$ symmetric matrix $X=(x_{ij})$. The notation
$X\succeq0$ denotes the requirement that $X$ is symmetric positive
semidefinite.

\begin{example}[SDP with infinite gap] \label{sec:example_1}
Consider the $2\times2$ SDP
\begin{equation}
  \label{eq:sdp1}
  \minimize{X\succeq0} \enspace -2 x_{21}
  \enspace\st\enspace x_{11} = 0,
\end{equation}
whose solution and optimal value are given, respectively, by
\[
  X_* = \bmat{0 & 0 \\ 0 & 0} \quad\mbox{and}\quad \popt = 0.
\]
The Lagrange dual is a feasibility problem:
\[
  \maximize{y\in\Real} \enspace 0
  \enspace\st\enspace
  \bmat{\phantom-y & -1 \\ -1 & \phantom-0} \succeq 0.
\]
Because the dual problem is infeasible, we assign the dual optimal
value $\dopt = -\infty$. Thus, $\dopt = -\infty < \popt=0$, and this dual pairing fails to have strong duality.

The application of the level-set method to the primal problem~\eqref{eq:sdp1} can be accomplished by defining the functions
\[
  f(X) := -2 x_{21} \text{and} g(X) := |x_{11}| ,
\]
which together define the value function of the level-set problem~\eqref{eq:cvx_flipped}:
\begin{equation}
\label{eq:123}
  v(\tau) = \inf_{X\succeq0}
  \big\{\;\abs{x_{11}}
    \ \big|\
    {-2x_{21}}\le\tau\;
    \big\}.
\end{equation}
Because $X^*$ is primal optimal, $v(\tau) = 0$ for all
$\tau \geq \popt=0$. Now consider the parametric matrix
\[
  X(\tau, \epsilon) :=
  \bmat{\epsilon & \frac\tau{2_{\vphantom2}} \\[3pt] \frac\tau2 & \frac{\tau^2}{4\epsilon} }
  \textt{for all} \mbox{$\tau<0$ and $\epsilon > 0$},
\]
which is feasible for the level-set problem~\eqref{eq:123}. Thus,
$v(\tau)$ is finite. The level-set problem clearly has a zero lower
bound that can be approached by sending $\epsilon\downarrow0$. Thus,
$v(\tau) = 0$ for all $\tau < 0$.

In summary, $v(\tau) = 0$ for all
$\tau$, and so $v(\tau)$ has roots less than the true optimal
value $\popt$. Furthermore, for $\tau < 0$, there is no primal attainment for
\eqref{eq:Pf}, because $\lim_{\epsilon\downarrow0}X(\tau, \epsilon)$
does not exist.\qed
\end{example}

\begin{example}[SDP with finite gap] \label{sec:example_2}
Consider the $3\times3$ SDP
\begin{equation}
  \label{eq:sdp2}
  \minimize{X\succeq0} \enspace -2 x_{31}
  \enspace\st\enspace
  x_{11} = 0,\ x_{22} + 2 x_{31} = 1.
\end{equation}
The positive semidefinite constraint on $X$, together with the
constraint $x_{11}=0$, implies that $x_{31}$ must vanish. Thus, the
solution and optimal value are given, respectively, by
\begin{equation}\label{eq:2}
  X^* = \bmat{0 & 0 & 0 \\ 0 & 1 & 0 \\ 0 & 0 & 0}
  \textt{and}
  \popt = 0.
\end{equation}
The Lagrange dual problem is
\begin{equation*}
  \maximize{y\in\Real^2} \enspace -y_2
  \enspace\st\enspace
  \bmat{y_1 & 0 & y_2-1 \\ 0 & y_2 & 0 \\ y_2 - 1 & 0 & 0} \succeq 0.
\end{equation*}
The dual constraint requires $y_2 = 1$, and thus the optimal dual
value is $\dopt = -1 < 0 = \popt$.

For the application of the level-set method to
primal problem~\eqref{eq:sdp2}, we assign
\begin{equation}\label{eq:1}
  f(X) := -2x_{31}
  \enspace\mbox{and}\enspace
  g(X) :=  x_{11}^2 + (x_{22} + 2x_{31} - 1)^2,
\end{equation}
which together define the value function
\begin{equation}
  \label{eq:3}
  v(\tau) = \inf_{X\succeq0}\set{x_{11}^2 + (x_{22} + 2x_{31} - 1)^2 | -2x_{31} \le \tau}.
\end{equation}
As in \cref{sec:example_1}, any convex nonnegative $g$
function that vanishes on the feasible set could have been used to
define $v$. It follows from~\eqref{eq:2} that $v(\tau) = 0$ for all
$\tau \geq 0$. Also, it can be verified that $v(\tau) = 0$ for all
$\tau\ge\dopt=-1$. To understand this, first define the parametric matrix
\[
  X_\epsilon
  = \bmat{%
      \epsilon & 0 & \frac{1}{2}
    \\ 0 & 0 & 0
    \\ \frac{1}{2} & 0 & \frac{1}{4\epsilon}
  }
  \textt{with}
  \epsilon > 0,
\]
which is feasible for level-set problem~\eqref{eq:3}, and has
objective value $g(X_\epsilon) = \epsilon^2$. Because $X_\epsilon$ is
feasible for all positive $\epsilon$, the optimal value vanishes
because $v(\tau) = \inf\set{g(X_\epsilon)|\epsilon>0} = 0$. Moreover,
the set of minimizers for~\eqref{eq:3} is empty for all
$\tau\in(-1,0)$. \cref{fig:val-func-sdp} illustrates the behavior of
this value function.

Thus, we can produce a
sequence of matrices $X_\epsilon$ each of which is
$\epsilon$-infeasible with respect to the infeasibility measure
given by~\eqref{eq:1}. However, the limit as $\epsilon\downarrow0$
does not produce a feasible point, and the limit does not even exist
because the entry $x_{33}$ of $X_\epsilon$ goes to infinity.

The level-set method fails since the root of $v(\tau)$ identifies an
incorrect optimal primal value $\popt$, and instead identifies the
optimal dual value $\dopt<\popt$. \qed

\end{example}

\section{Value functions}

The level-set method based on~\eqref{eq:Pf} is founded on the inverse-function
relationship between the pair of ``flipped'' value functions
\begin{subequations} \label{eq:5}
\begin{align}
  p(u) &= \inf_{x\in\Xscr}\set{f(x) | g(x)\le u} \label{eq:5-p-func}
\\v(\tau)   &= \inf_{x\in\Xscr}\set{g(x) | f(x)\le\tau}.  \label{eq:5-v-func}
\end{align}
\end{subequations}
Clearly, $\popt=p(0)$. Here we summarize the key aspects of the  
relationship between the value functions $v$ and $p$, and their
respective solutions. \citet{AravkinBurkeFriedlander:2013} provide a complete description.

Let $\argmin\,v(\tau)$ and $\argmin\,p(u)$, respectively, denote the set of
solutions to the optimization problem underlying the value functions $v$ and
$p$. Thus, for example, if the value $p(u)$ is finite,
\[
  \argmin\,p(u) =
    \set{x\in\Xscr | f(x)=p(u),\ g(x)\le0};
\]
otherwise, $\argmin p(u)$ is empty.  Clearly,
$\argmin\,p(0)=\argmin\,\eqref{eq:cvx_primal}$.  Because $p$ is defined
via an infimum, $\argmin p(u)$ can be empty even if $p$ is
finite, in which case we say that the value $p(u)$ is not
attained.

Let $\Sscr$ be the set of parameters $\tau$ for which the level-set
constraint $f(x)\le\tau$ of~\eqref{eq:cvx_flipped} holds with
equality. Formally,
  \begin{equation*}
  \Sscr
  = \big\{ \tau \le+\infty \mid \emptyset \neq \argmin v(\tau)
    \subseteq \set{ x \in \Xscr | f(x) = \tau} \big\}.
\end{equation*}
The following theorem establishes the relationships between the value functions
$p$ and $v$, and their respective solution sets. This result is reproduced from \citet[Theorem 2.1]{AravkinBurkeFriedlander:2013}.

\begin{theorem}[Value-function inverses]
  \label{th:value-inverse}
  For every $\tau\in\Sscr$, the following statements hold:
\begin{enumerate}
  \item[\rm(a)] $(p\circ v)(\tau)=\tau$,
  \item[\rm(b)] $\argmin v(\tau)
             = \argmin\, (p\circ v)(\tau)
               \subseteq \set{x \in \Xscr | f(x) = v(\tau) }$.
\end{enumerate}
\end{theorem}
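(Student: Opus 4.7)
The plan is to establish part (a) first and then read off part (b) by explicitly characterizing both argmin sets. The key leverage for both parts is the defining property of $\Sscr$: for $\tau\in\Sscr$ the set $\argmin v(\tau)$ is nonempty and every $x^*$ in it satisfies $f(x^*)=\tau$. In other words, membership in $\Sscr$ packages together a primal-attainment hypothesis and an active-constraint hypothesis, and both are used nontrivially in what follows.

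For part (a), set $u=v(\tau)$. The inequality $p(u)\le\tau$ is immediate: pick any $x^*\in\argmin v(\tau)$, which exists because $\tau\in\Sscr$; since $g(x^*)=u$, the point $x^*$ is feasible for the infimum defining $p(u)$, so $p(u)\le f(x^*)=\tau$. For the reverse inequality I would argue by contradiction: if $p(u)<\tau$, then by the definition of the infimum there exists $\tilde x\in\Xscr$ with $g(\tilde x)\le u$ and $f(\tilde x)<\tau$ (just pick any value strictly between $p(u)$ and $\tau$). This $\tilde x$ is feasible for the problem defining $v(\tau)$, so $v(\tau)\le g(\tilde x)\le u=v(\tau)$, which forces $g(\tilde x)=u$ and hence $\tilde x\in\argmin v(\tau)$. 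But $f(\tilde x)<\tau$ then contradicts the defining property of $\Sscr$.

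For part (b), use $(p\circ v)(\tau)=\tau$ from part (a) to rewrite $\argmin(p\circ v)(\tau)$ as $\set{x\in\Xscr | f(x)=\tau,\ g(x)\le v(\tau)}$. For any such $x$, feasibility in the $v$-problem together with the infimum definition of $v(\tau)$ forces $g(x)=v(\tau)$, whence $x\in\argmin v(\tau)$. Conversely, any $x\in\argmin v(\tau)$ has $g(x)=v(\tau)$ by definition and $f(x)=\tau$ by $\tau\in\Sscr$, so it belongs to $\argmin(p\circ v)(\tau)$. Both sets therefore coincide with $\set{x\in\Xscr | f(x)=\tau,\ g(x)=v(\tau)}$, which sits inside the set displayed on the right-hand side of (b).

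The only real obstacle is that $p(u)$ need not be attained, so the reverse inequality in part (a) cannot use an exact minimizer of $p(u)$; the remedy is to choose a near-minimizer whose $f$-value lies strictly in the interval $(p(u),\tau)$. The clean consequence is that, although that $\tilde x$ is only approximately optimal for the $p$-problem, it is automatically exactly optimal for the $v$-problem, which is exactly the regime in which the active-constraint clause of $\Sscr$ can fire and produce the contradiction. This is also the one place where the hypothesis $\tau\in\Sscr$ is essential and cannot be relaxed to mere finiteness of $v(\tau)$.
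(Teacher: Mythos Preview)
The paper does not prove this theorem: it explicitly says the result is ``reproduced from \citet[Theorem 2.1]{AravkinBurkeFriedlander:2013}'' and gives no argument of its own. There is therefore nothing in this paper to compare your proof against.

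That said, your argument is correct and is the natural one. Part (a) is handled cleanly: the forward inequality $p(v(\tau))\le\tau$ uses attainment in the $v$-problem, and the reverse uses a near-minimizer of the $p$-problem together with the active-constraint clause of $\Sscr$ to force a contradiction. Your observation that one cannot assume $p(v(\tau))$ is attained, and must instead pick $\tilde x$ with $f(\tilde x)$ strictly between $p(v(\tau))$ and $\tau$, is exactly the subtlety that needs to be handled. Part (b) then falls out by unwinding the definitions once (a) is in hand; your characterization of both argmin sets as $\set{x\in\Xscr\mid f(x)=\tau,\ g(x)=v(\tau)}$ is the right way to see the equality.

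One remark on the final inclusion: the set on the right of (b) as printed reads $\set{x\in\Xscr\mid f(x)=v(\tau)}$, which is almost certainly a typographical error in the paper (it should be $g(x)=v(\tau)$, the constraint-activity statement for the $p$-problem, symmetric to the role $\Sscr$ plays for the $v$-problem). Your proof establishes the corrected version.
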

The condition $\tau\in\Sscr$ means that the constraint of the
level-set problem~\eqref{eq:cvx_flipped} must be active in order for
the result to hold. The following example establishes that this condition is necessary.

\begin{example}[Failure of value-function inverse]
  The univariate problem
\[
  \minimize{x\in\Real}\enspace\abs x \enspace\st\enspace \abs{x}-1\le0
\]
has the trivial solution $x^*=0$ with optimal value $\popt=0$. Note that the
constraint is inactive at the solution, which violates the hypothesis of
\cref{th:value-inverse}.
Now consider the value
functions
\begin{align*}
  p(u) &= \inf\,\{\ \abs x \,:\, \abs x-1\le u\ \},
\\v(\tau)   &= \inf\,\{\ \abs x - 1 \,:\, \abs x\le\tau   \},
\end{align*}
which correspond, respectively, to a parameterization of the original problem,
and to the level-set problem. The level-set value function $v$ evaluates to
\[
  v(\tau) =
  \begin{cases}
    -1      & \mbox{if $\tau\ge\popt$}
  \\+\infty & \mbox{if $\tau<\popt$.}
  \end{cases}
\]
Because $p$ is nonnegative over its domain, there is no value $\tau$ for which the
inverse-function relationship shown by \cref{th:value-inverse}(a) holds.
\end{example}

\cref{th:value-inverse} is symmetric, and holds if the roles of $f$ and $g$, and
$p$ and $v$, are reversed. \citet{AravkinBurkeFriedlander:2013} show that
this result holds even if the underlying functions and sets that
define~\eqref{eq:cvx_primal} are not convex.

Part (b) of the theorem confirms that if $\popt\in\Sscr$, i.e., the
constraint $g(x)\le0$ holds with equality at a solution of~\eqref{eq:cvx_primal},
then solutions of the level-set problem
coincide with solution of the original problem defined by $p(0)$. More
formally,
\[
  \argmin v(\popt)=\argmin\,\eqref{eq:cvx_primal}.
\]

Again consider \cref{sec:example_2}, where we set $\tau = -1/2$, which
falls midway between the interval $(\dopt,\popt)=(-1,0)$. Because the
solution set $\argmin v(\tau)$ is empty, $\tau\notin\Sscr$. Thus,
\[
  (p\circ v)(\tau) = p(0) = 0 \neq \tau,
\]
and the level-set method fails.

In order establish an inverse-function-like relationship between the
value functions $p$ and $v$ that always holds for convex problems, we
provide a modified definition of the epigraphs for $v$ and $w$.

\begin{definition}[Value function epigraph]
  The {\em value function epigraph} of the optimal value function
  $p$ in~\eqref{eq:5-p-func} is defined by
  \[
    \vfepi p =
    \Set{%
      (u,\tau) | \exists x\in\Xscr, \, f(x)\le\tau,\ g(x) \leq u
    }.
  \]
\end{definition}

This definition similar to the regular definition
for the epigraph of a function, given by
\[
  \epi p = \Set{ (u,\tau) | p(u)\le\tau },
\]
except that if $\tau = p(u)$ but $\argmin p(u)$ is empty,
then $(u, \tau) \notin \vfepi w$.

The result below follows immediately from the definition of the value
function epigraph.  It establishes that \eqref{eq:11} holds if~\eqref{eq:cvx_flipped} has a solution that attains its optimal value
(as opposed to relying on the infimal operator to achieve that value).

\begin{proposition} For the value functions $p$ and $v$,
\[
  (u, \tau) \in \vfepi p \iff (\tau, u) \in \vfepi v.
\]
\end{proposition}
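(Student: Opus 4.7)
The plan is to unfold the two \emph{value function epigraph} definitions and observe that both sides of the biconditional reduce, symbol-for-symbol, to the same existential statement about $x\in\Xscr$. This is essentially a tautology: the modified epigraph construction was designed precisely so that each point in it records the existence of a witness $x$ satisfying the two inequalities jointly, and the roles of the two inequalities are interchangeable.

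Concretely, I would start by writing out $\vfepi p$ exactly as defined, so that
\[
(u,\tau)\in\vfepi p \iff \exists\, x\in\Xscr \text{ such that } f(x)\le\tau \text{ and } g(x)\le u.
\]
Then I would apply the same definition, but with the roles of $f$ and $g$ (and hence of $p$ and $v$) swapped, to get
\[
(\tau,u)\in\vfepi v \iff \exists\, x\in\Xscr \text{ such that } g(x)\le u \text{ and } f(x)\le\tau.
\]
Since the two right-hand sides are literally the same condition on $x$ (only the order in which the inequalities are listed differs), the two left-hand sides are equivalent.

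There is essentially no analytic obstacle here; the only thing to be careful about is identifying which coordinate plays which role in $\vfepi v$. Because the written definition places the constraint level in the first coordinate and the objective value in the second coordinate for $p$, the analogous convention for $v$ puts $\tau$ (the constraint level on $f$) first and $u$ (the value of $g$) second, which is exactly the ordering that appears in the statement. Note that nothing about convexity, attainment, or finiteness of $p(u)$ or $v(\tau)$ enters the argument — the whole point of using $\vfepi$ rather than $\epi$ is that each membership is encoded by a common feasibility witness, so the biconditional holds unconditionally.
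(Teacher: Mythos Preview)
Your proof is correct and matches the paper's approach exactly: the paper states only that the result ``follows immediately from the definition of the value function epigraph,'' and your argument is precisely that immediate unfolding, with the added care of checking the coordinate ordering for $\vfepi v$.
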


\section{Duality in convex optimization} \label{sec:duality-framework}

Duality in convex optimization can be understood as describing the
behavior of an optimization problem under perturbation to its
data. From this point of view, dual variables describe the sensitivity
of the problem's optimal value to that perturbation. The description
that we give here summarizes a well-developed theory fully described
by~\citet{rtrw:1998}. We adopt a geometric
viewpoint that we have found helpful for understanding the connection
between duality and the level-set method, and lays out the objects needed for
the analysis in subsequent sections.

For this section only, consider the generic convex optimization problem
\[
  \minimize{x\in\Xscr}\enspace h(x),
\]
where $h:\Real^n\to\Real\cup\{\infty\}$ is an arbitrary closed proper convex function.
The perturbation approach is
predicated on fixing a certain convex function
$F(x,u):\Real^n\times\Real^m\to\Real\cup\{\infty\}$ with the property
that
\[
  F(x,0) = h(x) \quad \forall x.
\]
Thus, the particular choice of $F$ determines the perturbation function
\[
  p(u) := \inf_{x} F(x,u),
\]
which describes how the optimal value of $h$ changes under a
perturbation $u$. We seek the behavior of the perturbation function
about the origin, at which the value of $p$ coincides with the optimal
value $\popt$, i.e., $p(0)=\popt$.

The convex conjugate of the function $p$ is
\[
  \conj p(\mu) = \sup_u\set{\ip \mu u - p(u) }
\]
defines the affine function $\mu\mapsto\ip \mu u - \conj p(\mu)$ that minorizes
$p$ and supports the epigraph of $p$; see \cref{fig:dual-value-functions}. The
biconjugate $\biconj p$
provides a convex and closed function that is a global lower envelope for $p$,
i.e., $\biconj p(u)\le p(u)$ for all $u$. This last inequality is tight at a point $u$, i.e.,
$\biconj p(u)=p(u)$, if and only if $p$ is lower-semicontinuous at
$u$ \cite[Theorem~7.1]{roc70}. Because of the connection between lower semicontinuity and the
closure of the epigraph, we say that $p$ is \emph{closed} at such
points $u$.

As described by \citet[Lemma 11.38]{rtrw:1998}, the function $p$ and its
biconjugate $\biconj p$ define dual pairs of optimization problems given by
\begin{equation} \label{eq:12}
  p(0) = \inf_x\, F(x,0)
  \quad\mbox{and}\quad
  \biconj p(0) = \sup_y\, {-\conj F(0,y)},
\end{equation}
which define the primal and dual optimal values
\begin{equation} \label{eq:6}
  \dopt \coloneqq \biconj p(0) \le p(0) =: \popt.
\end{equation}
Strong duality holds when
$\popt=\dopt$, which indicates the closure of $p$ at the origin.
As we show in \cref{sec:value-duality}, the optimal dual value $\dopt$ coincides
with the value of the infimal value defined in~\eqref{eq:duality-gap}.

\begin{figure*}[t]
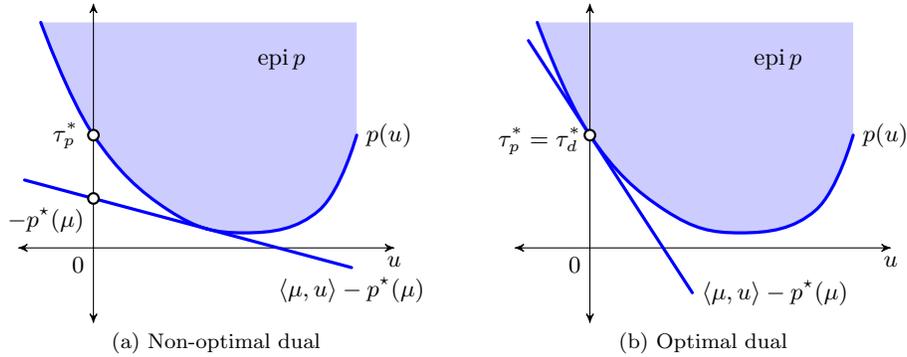

  \centering
  \begin{tabular}{@{}c@{\hspace{.3in}}c@{}}
     \includegraphics[page=4]{illustrations} 
    &\includegraphics[page=3]{illustrations}
\\ (a) Non-optimal dual & (b) Optimal dual
  \end{tabular}
  \caption{The relationship between the primal perturbation value $p(u)$ and
    a single instance (with slope $\mu$ and intercept $q_\mu$) of the
    uncountably many minorizing affine functions that define the dual
    problem. The panel on the left depicts a non-optimal supporting
    hyperplane that crosses the vertical axis at $-\conj p(\mu)<\popt$; the panel on the
    right depicts an optimal supporting hyperplane that generates a
    slope $\mu$ and intercept $-\conj p(\mu)=\popt$.
    \label{fig:dual-value-functions}}
\end{figure*}

The following well-known result establishes a constraint qualification for
\eqref{eq:cvx_primal} that ensures strong duality holds. See \citet[Theorem
11.39]{rtrw:1998} for a more comprehensive version of this result. 
\begin{theorem}[Weak and strong duality] \label{th:duality}
  Consider the primal-dual pair~\eqref{eq:12}.
  \begin{enumerate}
  \item[\rm(a)] {\normalfont[Weak duality]} The inequality $\popt \ge \dopt$ always holds.
  \item[\rm(b)] {\normalfont[Strong duality]} If $0\in\interior\dom p$,
    then $\popt=\dopt$.
  \end{enumerate}
\end{theorem}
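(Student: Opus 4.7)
For part (a), the plan is to read off weak duality directly from the Fenchel--Young inequality. By definition of the conjugate, $\conj p(\mu) \ge \ip{\mu}{u} - p(u)$ for every $u$ and $\mu$; instantiating at $u=0$ gives $\conj p(\mu) \ge -p(0)$, equivalently $-\conj p(\mu) \le \popt$. Taking the supremum over $\mu$ yields
$\dopt = \biconj p(0) = \sup_\mu\{-\conj p(\mu)\} \le \popt$,
which is the weak-duality inequality.

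For part~(b), I would use the lower-semicontinuity characterization of the biconjugate already quoted in the paragraph preceding the theorem: $\biconj p(u) = p(u)$ if and only if $p$ is lower semicontinuous at $u$ (Rockafellar, Theorem 7.1). The plan then has two steps. First I would observe that $p$ is a proper convex function. Convexity is a standard marginal-function fact: because the perturbation function $F(x,u)$ is jointly convex on $\Real^n \times \Real^m$, its partial infimum $p(u) = \inf_x F(x,u)$ is convex in $u$; properness follows from $\popt = p(0)$ being finite together with $F$ being proper. Second I would invoke the standard fact that a proper convex function is continuous, and therefore lower semicontinuous, throughout the interior of its effective domain. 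Under the hypothesis $0 \in \interior \dom p$ this yields lower semicontinuity of $p$ at the origin, and the characterization above then gives $\dopt = \biconj p(0) = p(0) = \popt$.

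Each of the two steps for part~(b) is independently standard, so there is no single hard obstacle; the only point needing care is verifying that the conjugacy result applies, in particular that $p$ is proper so the interior-of-domain continuity fact is available. A more geometric route, matching Figure~\ref{fig:dual-value-functions}(b), would be to argue that $0 \in \interior \dom p$ forces the epigraph of $p$ to admit a non-vertical supporting hyperplane at $(0,\popt)$; the slope $\mu$ of any such hyperplane is then an optimal dual variable with $-\conj p(\mu) = \popt$, again realising $\dopt = \popt$.
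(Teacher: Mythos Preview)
The paper does not actually prove \cref{th:duality}: it is stated as a well-known result and the reader is referred to \citet[Theorem~11.39]{rtrw:1998} for details. So there is no ``paper's own proof'' to compare against. Your proposal is a correct and standard derivation of both parts: part~(a) is an immediate consequence of Fenchel--Young at $u=0$, and part~(b) follows because a proper convex function is continuous on the interior of its domain, hence lower semicontinuous there, which by the biconjugate characterization quoted just above the theorem gives $\biconj p(0)=p(0)$. The only minor care point you already flagged---properness of $p$---is fine here since $p(0)=\popt$ is finite by assumption and $p>-\infty$ would follow from $F$ being proper convex (or one simply works with the closure).
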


To establish the connection between the pair of value functions~\eqref{eq:5}
for~\eqref{eq:cvx_primal} and this duality framework, we observe that
\[
  p(u) = \inf_{x\in\Xscr}\,\set{f(x)|g(x)\le u}
  = \inf_x\,F(x,u),
\]
where 
\begin{equation} \label{eq:16}
  F(x,u) \coloneqq f(x) + \delta_\Xscr(x) + \delta_{\epi g}(x, u),
\end{equation}
and the indicator function $\delta_{\Cscr}$ vanishes on the set $\Cscr$ and is
$+\infty$ otherwise. The dual problem $\biconj p(0)$ defined in~\eqref{eq:12} is
derived as follows:
\begin{equation} \label{eq:15}
  \begin{aligned}
    \biconj p(0) &= \sup_\lambda\, -\conj F(0,\lambda) 
    \\ &= \sup_\lambda\, \inf_{x,u}\set{ f(x) + \delta_{\Xscr}(x) - \lambda u + \delta_{\epi g}(x,u) }
    \\ &= \sup_{\lambda\le0} \inf_{x\in\Xscr}\set{ f(x) - \lambda g(x)}.
  \end{aligned}
\end{equation}
We recognize this last expression as the familiar Lagrangian-dual for the
optimization problem~\eqref{eq:cvx_primal}.

\section{Duality of the value function root} \label{sec:value-duality}

We now provide a formal statement and proof our main result concerning
problem~\eqref{eq:cvx_primal} and the inequality shown in~\eqref{eq:duality-gap}. In the
latter part of this section we also provide a straight-forward extension of the main
result that allows for multiple constraints, and not just a single constraint
function, as specified by~\eqref{eq:cvx_primal}.

Note that the theorem below does not address conditions under which $v(\popt)\le0$,
which is true if and only if the solution set $\argmin\,\eqref{eq:cvx_primal}$
is not empty. In particular, any $x^*\in\argmin\,\eqref{eq:cvx_primal}$ is a
solution of~\eqref{eq:cvx_flipped} for $\tau=\popt$, and hence $v(\popt)\le0$.
However, if $\argmin\,\eqref{eq:cvx_primal}$ is empty, then there is no solution
to~\eqref{eq:cvx_flipped} and hence $v(\popt)=+\infty$.

\begin{theorem}[Duality of the value function root]
  \label{thm:val-fnc-roots}
  For problem~\eqref{eq:cvx_primal} and the pair of value function $v$ and $p$, defined by~\eqref{eq:5},
  \[
    \dopt = \inf\set{\tau|v(\tau)\le0}
    \quad\mbox{and}\quad
    v(\tau) \le 0 \enspace\mbox{for all}\enspace \tau > \dopt,
  \]
  where $\dopt\coloneqq\biconj p(0)$ is the optimal value of the Lagrange-dual problem~\eqref{eq:15}.
\end{theorem}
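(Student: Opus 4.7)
The plan is to relate both $\dopt=\biconj p(0)$ and the set $\set{\tau | v(\tau)\le0}$ to membership of the point $(0,\tau)$ in the closure of the ordinary epigraph $\epi p$. The key enabling fact is that because $f$ and $g$ are closed proper convex and $\Xscr$ is convex, the perturbation function $p$ from~\eqref{eq:5-p-func} is convex, so its biconjugate coincides with the lower-semicontinuous hull of $p$ and $\epi\biconj p=\cl\epi p$. In particular, $(0,\tau)\in\cl\epi p$ if and only if $\tau\ge\biconj p(0)=\dopt$.

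Next I would unpack $v(\tau)\le0$ in epigraphic terms. Directly from the definition of the infimum, $v(\tau)\le0$ holds iff for every $\epsilon>0$ there exists $x\in\Xscr$ with $f(x)\le\tau$ and $g(x)\le\epsilon$; equivalently, $(\epsilon,\tau)\in\epi p$ for every $\epsilon>0$, which forces $(0,\tau)\in\cl\epi p$. Combined with the first paragraph, this already gives $v(\tau)\le0\Rightarrow\tau\ge\dopt$, and hence $\dopt\le\inf\set{\tau | v(\tau)\le0}$.

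For the second conclusion of the theorem (which also yields the reverse inequality between the two infima), I would show that $\tau>\dopt$ implies $v(\tau)\le0$. Pick $\tau'$ with $\dopt<\tau'<\tau$; since $(0,\tau')\in\epi\biconj p=\cl\epi p$, there is a sequence $(u_n,\tau_n)\in\epi p$ converging to $(0,\tau')$, which we may take with $u_n>0$ and $\tau_n<\tau$ for $n$ sufficiently large. The definition of $p$ then supplies $x_n\in\Xscr$ with $g(x_n)\le u_n$ and $f(x_n)\le\tau_n+1/n\le\tau$ eventually, so $v(\tau)\le g(x_n)\le u_n\to0$. This establishes $v(\tau)\le0$ for all $\tau>\dopt$, and combined with the previous paragraph yields $\dopt=\inf\set{\tau | v(\tau)\le0}$.

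The main subtlety to handle carefully is the distinction between $\vfepi p$ and $\epi p$ highlighted by the proposition preceding the theorem: the infimum in $p(u)$ need not be attained, so points of $\epi p$ on the graph of $p$ can fail to lie in $\vfepi p$. The argument above sidesteps this by working entirely with $\cl\epi p$, where unattained boundary points are absorbed by the closure; this closure operation is exactly the mechanism that can open a strict gap $\dopt<\popt$ of the kind advertised in~\eqref{eq:duality-gap}.
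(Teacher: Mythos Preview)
Your argument is correct and follows essentially the same route as the paper: both hinge on the identification $\epi\biconj p=\cl\epi p$ and then translate the conditions $v(\tau)\le0$ and $\tau\ge\dopt$ into statements about membership of $(0,\tau)$ in $\cl\epi p$. Your presentation is slightly more streamlined in that you avoid the paper's case split on whether strong duality holds; the only minor imprecisions are the word ``equivalently'' (you need only the forward implication there) and the claim that one may take $u_n>0$ (unnecessary, and in any case easily arranged via the monotonicity of $p$), neither of which affects the validity of the proof.
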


Before giving the proof, below, we provide an intuitive argument for
\cref{thm:val-fnc-roots}. Suppose that strong duality holds for
\eqref{eq:cvx_primal}. Hence, $\popt=p(0) = p^{**}(0)=\dopt$, which means that
the perturbation function $p$ is closed at the origin. We sketch in the top row
of \cref{fig:nostrongduality} example pairs of value functions $p$ and $v$ that
exhibit this behavior. To understand this picture, first consider the value
$\tau_1 < \popt$, shown in the top row. It is evident that $v(\tau_1)$ is
positive, because otherwise there must exist a vector $x\in\Xscr$ that is
super-optimal and feasible, i.e.,
\[
  f(x)\le\tau_1<\popt \quad\mbox{and}\quad g(x)\le0,
\]
which
contradicts the definition of $\popt$. It then follows that the value
$u:=v(\tau_1)$ yields $p(u) = \tau_1$. For $\tau_2 > \ts$, any solution to the
original problem would be feasible (therefore requiring no perturbation $u$) and
would achieve objective value $p(0) = \popt < \tau_2$. Furthermore, notice that
as $\tau_1 \rightarrow \popt$, the value $p(u_1)$ varies continuously in
$\tau_1$, where $u_1$ is the smallest root of $p(u) = \tau_1$.

\begin{figure*}[t]
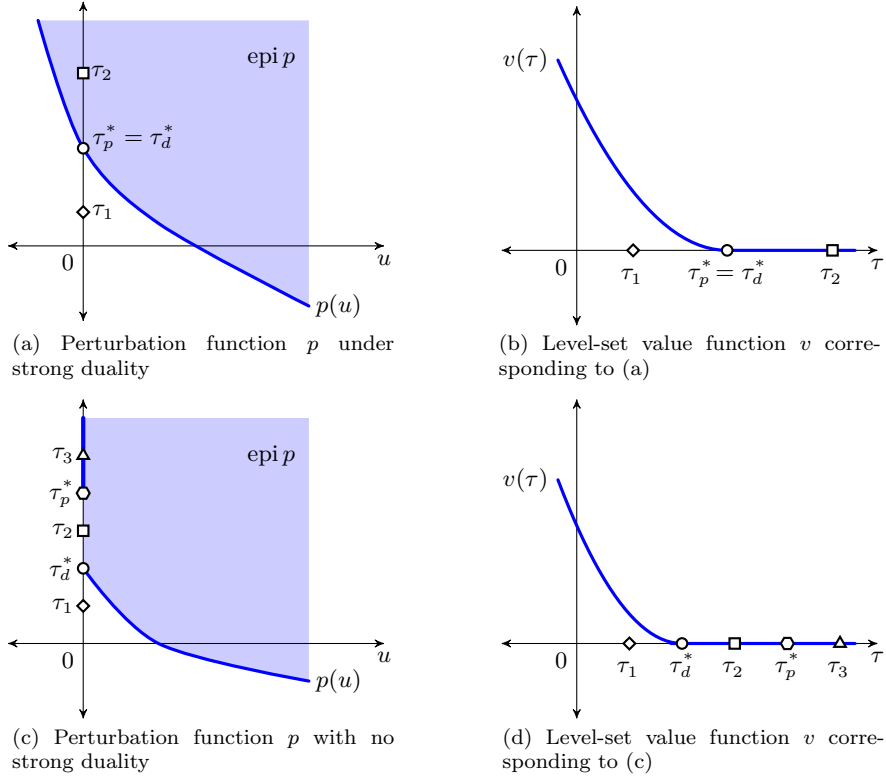

  \centering
  \begin{tabular}{@{}c@{\hspace{0.5in}}c@{}}
     \includegraphics[page=2]{illustrations}
    &\includegraphics[page=6]{illustrations}
    \\\parbox[t]{2in}{(a) Perturbation function $p$ under strong duality}
    & \parbox[t]{2in}{(b) Level-set value function $v$ corresponding to (a)}
      \vspace*{7pt}
    \\
     \includegraphics[page=5]{illustrations}
    &\includegraphics[page=7]{illustrations}
    \\\parbox[t]{2in}{(c) Perturbation function $p$ with no strong duality}
    & \parbox[t]{2in}{(d) Level-set value function $v$ corresponding to (c)}
  \end{tabular}
  \caption{The perturbation function $p(u)$ and corresponding
    level-set value function $v(\tau)$ for problems with strong
    duality (top row) and no strong duality (bottom row). Panel (c)
    illustrates the case when strong duality fails and the graph of
    $p$ is open at the origin, which implies that
    $\dopt<\popt\equiv p(0)$.}
  \label{fig:nostrongduality}
\end{figure*}

Next consider the second row of Figure~\ref{fig:nostrongduality}. In
this case, strong duality fails, which means that
$$\lim_{u \downarrow 0} p(u) = \dopt \neq p(0).$$ With
$\tau = \tau_1$, we have $v(\tau_1) > 0$. With
$\tau = \tau_3 > \popt$, we have $v(\tau) = 0$ because any solution
to~\eqref{eq:cvx_primal} causes~\eqref{eq:cvx_flipped} to have zero
value. But for $\dopt < \tau_2 < \popt$, we see that $v(\tau_2) = 0$,
because for any positive $\epsilon$ there exists positive
$u < \epsilon$ such that $p(u) \leq \tau_2$. Even though there is no
feasible point that achieves a superoptimal value
$f(x) \leq \tau_2 < \popt$, for any positive~$\epsilon$ there exists
an $\epsilon$-infeasible point that achieves that objective
value.

\begin{proof}[Theorem~\ref{thm:val-fnc-roots}]

We first prove the second result that $v(\tau)\le0$ if
$\tau>\dopt$.  Suppose that strong duality holds, i.e.,
$\popt=\dopt$. Then the required result is immediate because if $\popt$ is the optimal value, then for any $\tau > \popt$, there exists feasible $x$ such that $f(x)\le\tau$.

Suppose that strong duality does not hold, i.e., $\popt > \dopt$. If
$\tau>\popt$, it is immediate that $v(\tau)\le0$. Assume, then, that
$\tau\in(\dopt,\popt]$. Note that the two conditions $g(x) \le u$ and $f(x) \le
\tau$ are equivalent to the single condition $F(x,u)\le\tau$, where $F$ is defined
by~\eqref{eq:16}. We will therefore prove that
\begin{equation} \label{eq:8}
    \forall \epsilon > 0,\ 
    \mbox{$\exists x\in\Xscr$ such that}\
    F(x,u) \leq \tau,\ u\le\epsilon,
\end{equation}
which is equivalent to the required condition $v(\tau) \leq 0$.  It
follows from the convexity of $\epi p$ and from \eqref{eq:6} that
$(0,\dopt)\in\epi p^{**} = \cl \epi p$. Thus,
\[
    \forall \eta > 0,\
    \mbox{$\exists (u,\omega) \in \epi p$ such that}\
    \|(u,\omega) - (0, \dopt)\| < \eta.
\]
Note that
\begin{equation}  \label{eq:13}
\begin{aligned}
    \lim_{\epsilon\downarrow0}\inf\left\{p(u) \,\big|\, |u|\le \epsilon\right\}
    &\overset{\rm(i)}=
    \lim_{\epsilon\downarrow0}\inf\left\{\biconj p(u)\,\big|\,|u|\le \epsilon\right\}
    \\&\overset{\rm(ii)}=
    \biconj p(0)
    \overset{\rm(iii)}=\
    \dopt,
  \end{aligned}
\end{equation}
where equality (i) follows from the fact that $p(u) = \biconj p(u)$
for all $u\in\dom p$, equality (ii) follows from the closure of
$\biconj p$, and (iii) follows from \eqref{eq:6}.
This implies that
\[
    \forall \eta > 0,\
    \mbox{$\exists (u,\omega) \in \epi p$ such that}\
    \|(u,p(u)) - (0, \dopt)\| < \eta.
\]
For any fixed positive $\epsilon$ define
$\mu = \min \set{ \epsilon,\ \frac{1}{4} (\tau - \dopt)}$. Choose
$\hat u\in\dom p$ such that
$\|(\hat u,p(\hat u)) - (0, \dopt)\| < \mu$, and so
\[
  \epsilon\ge\mu > \|(\hat u,p(\hat u)) - (0, \dopt)\|
  \geq
  \max\left\{\,\norm{\hat u} ,\, |p(\hat u)-\dopt|\,\right\}.
\]
Thus,
\begin{equation} \label{eq:7}
      p(\hat u) < \dopt + \mu.
 \end{equation}
 Moreover, it follows from the definition of
 $p(\hat u)$, cf.~\eqref{eq:5-p-func}, that
 \begin{align*}
      \forall \nu > 0,\
      \mbox{$\exists x\in\Xscr$
      such that $F(x,\hat u) \leq p(\hat u) + \nu$}.
\end{align*}
Choose $\nu = \mu$, and so there exists $\hat x$ such that
$F(\hat x, \hat u) \leq p(\hat u) + \mu$. Together with \eqref{eq:7},
we have
\[
      f(\hat x) \leq p(\hat u) + \mu < \dopt + 2\mu \leq \tau.
\]
Therefore, for each $\epsilon > 0$, we can find a pair
$(\hat x,\hat u)$ that satisfies \eqref{eq:8}, which completes
the proof of the second result.

Next we prove the first result, which is equivalent to proving
that $v(\tau)>0$ if $\tau<\dopt$ because $v(\tau)$ is convex. Observe that
$\tau < \dopt \equiv p^{**}(0)$ is equivalent to
$(0,\tau) \notin \cl \epi p$, which implies that
\begin{equation} \label{eq:14}
\begin{aligned} 
  0 &< \inf_u \set{u | (u,\tau) \in \cl \epi p}
\\&= \inf_u \set{u | (u,\tau) \in \epi p}
  \\&= \inf_u \set{u | \exists x \in \Xscr \enspace\mbox{such that}\enspace F(x,u) \le \tau} = v(\tau),
\end{aligned}
\end{equation}
which completes the proof. \qed
\end{proof}
The proof of \cref{thm:val-fnc-roots} reveals that the behavior
exhibited by \cref{sec:example_1,sec:example_2} stems from the failure of
strong duality with respect to perturbations in the linear
constraints.

\subsection{General perturbation framework} \label{sec:general-duality}

We now generalize \cref{thm:val-fnc-roots} to inlclude arbitrary perturbations
to \eqref{eq:cvx_primal}, and thus more general notions of duality. In this case
we are interested in the value function pair
\begin{subequations} \label{eq:56}
\begin{align}
  p(u) &= \inf_{x\in\Xscr} F(x,u),
\\v(\tau) &= \inf_{x\in\Xscr}\set{\norm{u}|F(x,u)\le\tau},
\end{align}
\end{subequations}
where $F:\Real^n\times\Real^m\to\Real\cup\set{\infty}$ is an arbitrary convex
function with the property that $F(x,0)=f(x)$
(cf.~\cref{sec:duality-framework}), and $\norm{\cdot}$ is any norm. Because $p$
is parameterized by an $m$-vector $u$ and not just a scalar as previously
considered, we must consider the norm of the perturbation. Therefore, $v(\tau)$
is necessarily non-negative. We are thus interested in the leftmost root of the
equation $v(\tau) = 0$, rather than an inequality as in
\cref{thm:val-fnc-roots}.

\begin{example}[Multiple constraints] \label{ex:lvl-pert}
  Consider the convex optimization problem
  \begin{equation} \label{eq:lagrange-example}
    \minimize{x} \enspace f(x) \enspace\st\enspace c(x) \leq 0,\ Ax=b,
  \end{equation}
  where $c=(c_i)_{i=1}^m$ is a vector-valued convex function and $A$ is a matrix.
  Introduce perturbations $u_1$ and $u_2$ to the right-hand sides of
  the constraints, which gives rise to Lagrange duality, and
  corresponds to the perturbation function
  \[
    p(u_1, u_2) = \inf_x \set{ f(x) | c(x) \leq u_1,\enspace Ax - b = u_2 }.
  \]
  One valid choice for the value function that corresponds to swapping
  both constraints with the objective to~\eqref{eq:lagrange-example}
  can be expressed as
  \[
    v(\tau) = \inf_{x,u_1,u_2}
    \left\{
      \tfrac{1}{2}\|[u_1]_+\|_2^2 + \tfrac{1}{2} \|u_2\|_2^2
      \ \middle|\
      \begin{aligned}
        f(x) &\leq \tau\\ c(x)&\le u_1\\ Ax -b &=  u_2
        \end{aligned}
    \right\},
  \]
  where the operator $[u_1]_+=\max\{0, u_1\}$ is taken component-wise
  on the elements of $u_1$. This particular formulation of the value
  function makes explicit the connection to the perturbation
  function. We may thus interpret the value function as giving the
  minimal perturbation that corresponds to an objective value
  less than or equal to~$\tau$. \qed
\end{example}

\begin{theorem}
\label{thm:val-fnc-roots-general}
For the functions $p$ and $v$ defined by~\eqref{eq:56},
\[
  \dopt = \inf\set{\tau|v(\tau)=0}
  \quad\mbox{and}\quad
  v(\tau) = 0 \enspace\mbox{for all}\enspace \tau > \dopt.
\]
\end{theorem}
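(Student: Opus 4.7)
The plan is to adapt the proof of \cref{thm:val-fnc-roots} to the vector-perturbation setting by establishing separately that $v(\tau) = 0$ for every $\tau > \dopt$ and $v(\tau) > 0$ for every $\tau < \dopt$. These two facts together force $\set{\tau | v(\tau) = 0}$ to contain $(\dopt,\infty)$ and to be disjoint from $(-\infty,\dopt)$, so its infimum is exactly $\dopt$, which is precisely what the theorem asserts.

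The forward direction carries over almost verbatim from \cref{thm:val-fnc-roots}. Since $\dopt = \biconj p(0)$, the point $(0,\dopt)$ lies in $\epi \biconj p = \cl\epi p$. Given any $\epsilon > 0$, I would set $\delta = \min\{\epsilon,\tfrac{1}{4}(\tau - \dopt)\}$ and extract a pair $(\hat u, \omega) \in \epi p$ within distance $\delta$ of $(0,\dopt)$, yielding $\norm{\hat u} < \delta$ and $p(\hat u) < \dopt + \delta$. The infimum defining $p(\hat u)$ then supplies $\hat x \in \Xscr$ with $F(\hat x, \hat u) \le p(\hat u) + \delta < \dopt + 2\delta \le \tau$, making $(\hat x, \hat u)$ feasible for the problem defining $v(\tau)$, so $v(\tau) \le \norm{\hat u} < \epsilon$. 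Since $\epsilon$ is arbitrary and $v$ is nonnegative by construction, this forces $v(\tau) = 0$.

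The reverse direction is where the vector case requires real work, and it is the main obstacle. The scalar proof of \cref{thm:val-fnc-roots} exploited the monotonicity of $p$ inherited from the inequality constraint $g(x) \le u$ to translate $(0,\tau) \notin \cl\epi p$ directly into a positive lower bound on a single scalar $u$. No such monotonicity is available in $\Real^m$, so I would instead use a quantitative separating-hyperplane argument based on Fenchel duality. From $\tau < \dopt = \sup_\mu\set{-\conj p(\mu)}$, pick $\mu \in \Real^m$ with $-\conj p(\mu) > \tau$; the Fenchel--Young inequality supplies the global affine minorant $p(u) \ge \ip{\mu}{u} - \conj p(\mu)$ for every $u$. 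If some admissible $(x,u)$ satisfied $F(x,u) \le \tau$, then $p(u) \le \tau$, and combining with the minorant would force $\ip{\mu}{u} \le \tau + \conj p(\mu) < 0$. The dual-norm inequality $|\ip{\mu}{u}| \le \norm{\mu}_*\norm{u}$ then gives $\norm{u} \ge (-\conj p(\mu) - \tau)/\norm{\mu}_* > 0$ whenever $\mu \ne 0$; the degenerate case $\mu = 0$ reduces the minorant to $p(u) \ge -\conj p(0) > \tau$, so the feasible set is empty and $v(\tau) = +\infty$. Either way $v(\tau) > 0$, completing the second direction and hence the theorem.
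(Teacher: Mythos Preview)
Your proposal is correct, and the forward direction matches the paper's argument essentially line for line. The reverse direction, however, takes a genuinely different route from the paper.

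The paper does \emph{not} rely on monotonicity of $p$ in any essential way; it simply observes that $\tau<\dopt$ means $(0,\tau)\notin\cl\epi p$, so the closed slice $\set{u\mid(u,\tau)\in\cl\epi p}$ excludes the origin and therefore has strictly positive distance from it. Since any $(x,u)$ feasible for $v(\tau)$ satisfies $p(u)\le F(x,u)\le\tau$ and hence $(u,\tau)\in\epi p\subseteq\cl\epi p$, that positive distance is a lower bound for $v(\tau)$. This is exactly the replacement $u\mapsto\norm{u}$ in~\eqref{eq:14} that the paper indicates, and it works without change in the vector case; your concern that the scalar argument does not generalize is misplaced.

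That said, your Fenchel--Young argument is a legitimate alternative. It is slightly longer and requires the case split $\mu=0$ versus $\mu\ne0$, but it buys something the paper's argument does not: an explicit quantitative lower bound $v(\tau)\ge(-\conj p(\mu)-\tau)/\norm{\mu}_*$ in terms of any dual-feasible $\mu$ that certifies $\tau<\dopt$. The paper's closure argument is purely topological and gives positivity without a rate.
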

The proof is almost identical to that of \cref{thm:val-fnc-roots},
except that we treat $u$ as a vector, and replace $u$ by $\norm{u}$ in
\eqref{eq:8}, \eqref{eq:13}, and~\eqref{eq:14}.

\cref{thm:val-fnc-roots,thm:val-fnc-roots-general} imply that $v(\tau) \le 0$ for all values larger than the
optimal dual value. (The inequality $\tau > \dopt$ is strict, as
$v(\dopt)$ may be infinite.) Thus if strong duality does not hold,
then $v(\tau)$ identifies the wrong optimal value for the original
problem being solved. This means that the level-set method may
provide a point arbitrarily close to feasibility, but is at
least a fixed distance away from the true solution independent of how
close to feasibility the returned point may be. %

\begin{example}[Basis pursuit denoising \citep{cds98,chendonosaun:2001}]
  The level-set method implemented in the SPGL1 software package
  solves the 1-norm regularized least-squares problem
  \[
    \minimize{x} \enspace \norm{x}_1
    \enspace\st\enspace
    \norm{Ax-b}_2 \le u
  \]
  for any value of $u\ge0$, assuming that the problem remains
  feasible. (The case $u=0$ is important, as it accommodates the
  case in which we seek a sparse solution to the under-determined linear
  system $Ax=b$.) The algorithm approximately solves a sequence of
  flipped problems
  \[
    \minimize{x} \enspace \norm{Ax-b}_2
    \enspace\st\enspace
    \norm{x}_1 \le \tau_k,
  \]
  where $\tau_k$ is chosen so that the corresponding solution $x_k$
  satisfies $\norm{Ax_k-b}_2\approx u$.  Strong duality holds
  because the domains of the nonlinear functions (i.e., the 1- and
  2-norms) cover the whole space. Thus, the level-set method succeeds on this
  problem. \qed
\end{example}

\section{Sufficient conditions for strong duality}

The condition that $0\in\dom p$ may be interpreted as Slater's
constraint qualification \citep[\S3.2]{borwein2010convex}, which in
the context of~\eqref{eq:cvx_primal} requires that there exist a point
$\hat x$ in the domain of $f$ and for which $g(\hat x)<0$. This
condition is sufficient to establish strong duality. Here we show how
\cref{thm:val-fnc-roots} can be used as a device to characterize an
alternative set of sufficient conditions that continue to ensure
strong duality even for problems that do not satisfy Slater's
condition.

\begin{proposition}
\label{thm:strong-duality-holds}
Problem~\eqref{eq:cvx_primal} satisfies strong duality if either one of
the following conditions hold:
\begin{enumerate}
\item [\rm(a)] the objective $f$ is coercive, i.e., $f(x) \rightarrow \infty$ as $\|x\| \rightarrow \infty$;
\item [\rm(b)] $\Xscr$ is compact.
\end{enumerate}
\end{proposition}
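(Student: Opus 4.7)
The plan is to establish the contrapositive by way of Theorem~\ref{thm:val-fnc-roots}: if strong duality fails for~\eqref{eq:cvx_primal}, then neither condition~(a) nor~(b) can hold. So I would begin by assuming $\dopt < \popt$ and fixing any $\tau \in (\dopt,\popt)$. Theorem~\ref{thm:val-fnc-roots} then gives $v(\tau) \le 0$, which, unwinding the definition~\eqref{eq:5-v-func}, produces a sequence $\{x_k\}\subset \Xscr$ satisfying
\[
  f(x_k) \le \tau \quad\mbox{and}\quad g(x_k) \le 1/k \quad\mbox{for every } k.
\]
These are precisely the super-optimal, $\epsilon$-infeasible iterates that the level-set method would produce in the failure regime.

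Next I would show that $\{x_k\}$ admits a convergent subsequence with limit $x^\ast \in \Xscr$. Under (b), $\Xscr$ is compact, so this is immediate. Under (a), coercivity of $f$ together with the uniform bound $f(x_k)\le \tau$ confines $\{x_k\}$ to a bounded sublevel set of $f$, so that a convergent subsequence exists; closedness of $\Xscr$ (implicit for the duality framework of \cref{sec:duality-framework}, since $f+\delta_\Xscr$ must be closed) then places the limit in $\Xscr$. In either case, after relabeling, $x_k \to x^\ast \in \Xscr$.

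Finally, lower semicontinuity of the closed convex functions $f$ and $g$ yields
\[
  f(x^\ast) \le \liminf_{k\to\infty} f(x_k) \le \tau < \popt
  \quad\mbox{and}\quad
  g(x^\ast) \le \liminf_{k\to\infty} g(x_k) \le 0,
\]
so $x^\ast$ is feasible for~\eqref{eq:cvx_primal} with objective strictly less than $\popt$, contradicting the definition of $\popt$.

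The main obstacle is exactly the extraction of a limit point from the sequence of $\epsilon$-infeasible super-optimal points that Theorem~\ref{thm:val-fnc-roots} produces when strong duality fails; this is where hypotheses (a) and (b) do their work, from two natural angles — the objective side and the feasible-set side. The picture in \cref{sec:example_2} illustrates why some such condition is essential: the matrices $X_\epsilon$ there have a diverging $(3,3)$-entry and therefore admit no cluster point in $\Xscr=\{X\succeq0\}$, which is neither bounded nor a sublevel set of a coercive objective.
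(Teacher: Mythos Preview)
Your argument is correct and follows essentially the same route as the paper: assume strong duality fails, invoke \cref{thm:val-fnc-roots} to obtain $v(\tau)\le 0$ for some $\tau\in(\dopt,\popt)$, and then use compactness (supplied by either hypothesis) to produce an actual feasible point with objective value strictly below $\popt$, yielding a contradiction. The only cosmetic difference is that the paper phrases the compactness step as ``the feasible set of~\eqref{eq:cvx_flipped} is compact, hence the infimum in $v(\tau)$ is attained,'' whereas you unpack this attainment explicitly via a minimizing sequence and subsequential limit; the content is the same.
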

\begin{proof}
  Consider the level-set problem~\eqref{eq:cvx_flipped} and its
  corresponding optimal-value function $v(\tau)$ given
  by~\eqref{eq:Pf}.  In either case (a) or (b), the feasible set
  \[\set{x\in\Xscr|f(x)\le\tau}\] of \eqref{eq:Pf} is compact because
  either $\Xscr$ is compact or the level sets of $f$ are
  compact. Therefore,~\eqref{eq:cvx_flipped} always attains its
  minimum for all $\tau \geq \inf\set{f(x) | x\in\Xscr}$.

  Suppose strong duality does not hold. \cref{thm:val-fnc-roots} then
  confirms that there exists a parameter $\tau\in(\dopt,\popt)$ such
  that $v(\tau) = 0$. However, because~\eqref{eq:cvx_flipped} always
  attains its minimum, there must exist a point $\hat x \in \Xscr$
  such that $f(\hat x) \leq \tau < \popt$ and $g(x) \leq 0$, which
  contradicts the fact that $\popt$ is the optimal value
  of~\eqref{eq:cvx_primal}. We have therefore established that
  $\dopt = \popt$ and hence that~\eqref{eq:cvx_primal} satisfies
  strong duality. \qed
\end{proof}

We can use \cref{thm:strong-duality-holds} to establish that certain
optimization problems that do not satisfy a Slater constraint
qualification still enjoy strong duality. As an example, consider the conic optimization problem
\begin{equation} \label{eq:9} 
  \minimize{x} \enspace \ip c x \enspace\st\enspace \Ascr x=b,\ x\in\Kscr,
\end{equation}
where $\Ascr:\Escr_1\to E_2$ is a linear map between Euclidean spaces
$\Escr_1$ and $\Escr_2$, and $\Kscr\subseteq\Escr_1$ is a closed
proper convex cone. This wide class of problems includes linear
programming (LP), second-order programming (SOCP), and SDPs, and has
many important scientific and engineering applications
\citep{ben-nemi:2001}. If $c$ is in the interior of the dual
cone $\Kscr^*=\set{y\in\Escr_1|\ip x y \ge 0\ \forall x\in\Kscr}$,
then $\ip c x >0$ for all feasible $x\in\Kscr$. Equivalently, the
function $f(x):=\ip c x + \delta_\Kscr(x)$ is
coercive. Thus,~\eqref{eq:9} is equivalent to the problem
\[
  \minimize{x} \enspace f(x) \enspace\st\enspace \Ascr x=b,
\]
which has a coercive objective. Thus, Part (a) of
\cref{thm:strong-duality-holds} applies, and strong duality holds.

A concrete application of this model problem is the SDP
relaxation of the celebrated phase-retrieval problem~\citep{csv2013,waldspurger2015phase}
\begin{equation}
  \label{eq:phaselift}
  \minimize{X} \enspace \trace(X) \enspace\st\enspace \Ascr X=b,\ X\succeq0,
\end{equation}
where $\Kscr$ is now the cone of Hermitian
positive semidefinite matrices (i.e., all the eigenvalues are
real-valued and nonnegative) and $c=I$ is the identity matrix, so that
$\ip C X = \trace(X)$. In that setting, \citet{csv2013} prove that
with high probability, the feasible set of~\eqref{eq:9} is a rank-1
singleton (the desired solution), and thus we cannot use Slater's
condition to establish strong duality. However, because $\Kscr$ is
self dual \citep[Example 2.24]{bv:2004}, clearly $c\in\interior\Kscr$,
and by the discussion above, we can use
\cref{thm:strong-duality-holds} to establish that strong duality
holds~\eqref{eq:phaselift}.

A consequence of \cref{thm:strong-duality-holds} is that it is
possible to modify~\eqref{eq:cvx_primal} in order to guarantee strong
duality. In particular, we may regularize the objective, and instead
consider a version of the problem with the objective as
$f(x) + \mu \|x\|$, where the parameter $\mu$ controls the degree of
regularization contributed by the regularization term $\|x\|$. If, for
example, $f$ is bounded below on $\Xscr$, the regularized objective is
then coercive and \cref{thm:strong-duality-holds} asserts that the
revised problem satisfies strong duality. Thus, the optimal value
function of the level-set problem has the correct root, and the
level-set method is applicable. For toy problems such as
\cref{sec:example_1,sec:example_2}, where all of the feasible
points are optimal, regularization would not perturb the solution;
however, in general we expect that the regularization will perturb the
resulting solution, and in some cases this may be the desired
outcome.

\begin{acknowledgements}
  The authors are indebted to Professor Bruno F. Louren\c{c}o of Seikei
  University for fruitful discussions that followed the second author's
  course on first-order methods at the summer school associated with the
  2016 International Conference on Continuous Optimization, held in
  Tokyo. Professor Louren\c{c}o asked if level-set methods could be be
  applied to solve degenerate SDPs. His thinking was that the level-set
  problems~\eqref{eq:cvx_flipped} might satisfy Slater's constraint
  qualification even if the original problem~\eqref{eq:cvx_primal} did
  not, and therefore the level-set method might be useful as a way to
  alleviate numerical difficulties that can arise when an algorithm is
  applied directly to an SDP without strong duality. The conclusion of
  this paper suggests that this is not always the case. We also give sincere
  thanks to two anonymous referees for their many helpful suggestions, and to
  the Associate Editor, Tibor Csendes.
\end{acknowledgements}

\bibliography{shorttitles,master,friedlander,bib}
\bibliographystyle{spbasic}      %

\end{document}